      \theoremstyle{plain}
      \newtheorem{theorem}{Theorem}[section]
      \newtheorem{lemma}[theorem]{Lemma}
      \newtheorem{proposition}[theorem]{Proposition}
      \theoremstyle{definition}
  \newtheorem{example}[theorem]{Example}
      \theoremstyle{remark}
\begin{document}

\title{Dynamical Estimates on a Class of Quadratic Polynomial Automorphisms of $\mathbb C^3$ }
\author{Ozcan Yazici}
\thanks{The author was partially supported by the Qatar National Research Fund, NPRP project 7-511-1-098}
\subjclass[2010]{Primary 32H50; Secondary: 32U40}
\date{}
\address{Texas A\&M University at Qatar }
\email{ozcan.yazici@qatar.tamu.edu}

\pagestyle{myheadings}
\begin{abstract} 
Quadratic automorphisms of $\mathbb C^3$ are classified up to affine conjugacy into seven classes by Forn\ae ss and Wu. Five of them contain irregular maps with interesting dynamics. In this paper, we focus on the maps in the fifth class and make some dynamical estimates for these maps. 
\end{abstract}

\maketitle

\section{Introduction}

A polynomial automorphism $f$ of $\mathbb C^n$ and its inverse $f^{-1}$  define meromorphic maps of $\mathbb P^n$ which are well-defined away from the indeterminacy set $I^+$ (resp. $I^-$). An automorphism  $f$ of $\mathbb C^n$ is called \textit{regular} if the indeterminacy sets $I^+$ and $I^-$ of $f$ and $f^{-1}$ are disjoint. This assumption on the  indeterminacy sets gives a relation between the degree of $f$ and the degree of $f^{-1}$ which allows the construction of an invariant measure.  An extensive study of regular maps can be found in \cite {S}.

The more general class of \textit{weakly regular} automorphisms of $\mathbb C^n$ was studied by Guedj and Sibony in \cite{GS}. Roughly speaking, these are the maps for which $I^{+}$ and $X^+=f(\{t=0\}\setminus I^+)$ are disjoint where $\{t=0\}$ is the hyperplane at infinity in $\mathbb P^n.$

In \cite{FW}, the quadratic polynomial automorphisms of $\mathbb C^3$ are classified up to  affine conjugacy into $7$ classes, $5$ of which are dynamically interesting as they are non-linear. In \cite{CF}, Coman and Forn\ae ss studied these classes by estimating the rates of escape of orbits to infinity and by describing the subsets of $\mathbb C^3$ where such orbits occur. Using these estimates, they construct invariant measures for the maps in some of the classes.  The maps in $2$ of  these classes are the most complicated. They have the form
$$H_4(x,y,z)=(P(x,y)+az,Q(y)+x,y),$$
$$ H_5(x,y,z) = (P(x,y)+az, Q(x)+by,x),$$
where $\max\{deg(P ), deg(Q) \}=2$ and $a\neq 0\neq b.$ The complication is due to their strange behavior at infinity. They both map $\{t=0\}\setminus I^+$ onto $I^-$. For $H_5$, the extended indeterminacy set $I^+_{\infty} $ consists of union of two lines, $\{t=x=0\}\cup \{t=y=0\}$ and there are two  points, $\{[1:0:0:0], [0:1:0:0]\}=I^+_{\infty}\cap I^-$, where the orbits that tend to infinity with a slow rate may accumulate.

The dynamics of $H_4$ is explained in detail by Coman in \cite{C}. We focus on the class $H_5$ and work on the maps $$H:(x,y,z)\to (xy+az,x^2+by,x),\; ab\neq0, $$  
for simplicity. The iterates of $H$  will be denoted by $H^n(w)=w_n=(x_n,y_n,z_n)$ where $w=(x,y,z)\in \mathbb C^3.$ The induced map $H:\mathbb P^3\to \mathbb P^3$ is defined by 
$$H[x:y:z:t]=[xy+azt:x^2+byt:xt:t^2].$$  
Then $H$ is not (weakly) regular since $I^+=\{t=x=0\}$, $X^+=I^-=\{t=z=0\}$ and $X^+\cap I^+=[0:1:0:0].$ 

The paper is organized as follows. In Section \ref{dynamicsofH-1}, we define the invariant sets $K^-$ and $U^-$ for $H^{-1}$. \cite[Lemma 6.1]{CF} implies that the orbits  $H^{-n}(w)$ of points in $U^-$  escape to $[0:0:1:0]$ with a super-exponential rate $(const)^{3^n}$. It was known by \cite[Lemma 6.2]{CF} that the orbits $H^{-n}(w)$ of points in $K^-$ have at most exponential growth. In Proposition \ref{jd}, we show that if the coefficient  $b$ of $H$ has modulus $|b|>1$, then the orbits $H^{-n}(w)$ are bounded for all $w\in K^-.$

In Section \ref{dynamicsofH},  the $H$-invariant sets $U^+$ and $K^+=\mathbb C^3\setminus U^+$ are defined. It was known by \cite[Proposition 6.4]{CF} that on $U^+$, the orbits $H^n(w)$ escape to infinity at the super-exponential rate  $(const)^{2^n},$ while on $K^+$ the orbits escape to infinity at a much slower rate. We give shorter proof of these facts for our simplified map in Theorem \ref{gb} and Lemma \ref{2}. Points of $K^+$ accumulates  on two lines at infinity, namely $I^+=\{t=x=0\}$ and $\{t=y=0 \}$. This implies that unbounded $H$-orbits of points in $K^+$ may accumulate at two different points $[1:0:0:0]\cup[0:1:0:0]$ (see Theorem \ref{orbitK}). This fact causes a complicated behavior of the automorphism at infinity.

We  construct an $H^{-1}$ invariant current  $\sigma$ of bidimension $(1,1)$ on $\mathbb P^3$ which is supported on $\partial {\overline {K^-}}$ (see Theorem \ref{inv}). This may allow us to construct a non trivial $H^{-1}$-invariant measure in $\mathbb C^3$ by wedging with the Green's current $T^+$. We note that it is impossible to construct an invariant measure by using the Green's currents only as in the case of regular maps since $T^+\wedge T^+=T^-\wedge T^-=0$ in $\mathbb C^3$ (see Section \ref{invm}).
 
Theorem \ref{gb} $(iv)$ shows that  the orbits $H^n(w)$ of points in $K^+$ may escape to infinity with at most the super-exponential growth rate $(const)^{(\sqrt 3)^n}.$ However, it is not clear that if there is any orbit of a point in $K^+$ which has this sharp growth rate. In the case of $a=0$, $H$ becomes a map of $\mathbb C^2$ and there are some lines  in $K^+\subset \mathbb C^2$ on which the orbits have the sharp growth rate of $(const)^{(\sqrt 3)^n}$ (see Section \ref{2dimmodel}).

\noindent \textbf{Acknowledgments.} This paper is part of the author's Ph.D. thesis and he is grateful to his advisor Prof. Dan Coman for his guidance and support.

\section {Dynamics of $H^{-1}$}\label{dynamicsofH-1}
$H^{-1}$ is given by $$H^{-1}(x,y,z)=\left (z,\frac{y-z^2}{b} ,\frac{1}{a}\left (x-z\frac{y-z^2}{b}\right )\right).$$ We will denote the iterates of $H^{-1}$ by $H^{-n}(w)=(x_n,y_n,z_n)$, where $w=(x,y,z)\in \mathbb C^3$. Let us define the sets 

\begin{eqnarray*} V^{+}&=&\{w\in \mathbb C^{3}: |z|>\max\{R,|x|, (1+\delta)|y|^{1/2}\}\}, \nonumber \\
U^-&=&\cup_{n=0}^{\infty}H^{n}(V^+), \\
K^-&=&\mathbb C^3 \setminus U^-,\nonumber
\end{eqnarray*} 
where $\delta>0$ and $R$ is sufficiently big. In \cite[Lemma 6.1, Lemma 6.2]{CF}, it was shown that the $H^{-1}$-orbits of points in $U^-$ converge locally uniformly on $U^-$ to $[0:0:1:0]$ with super-exponential rate $(const)^{3^n}$, and for  $w\in K^-$, $||H^{-n}(w)||\leq C^n M(w)$ where $C>1$ and $M(w)>0$  depends on $w$ continuously.   
We will show that when $|b|>1$, the $H^{-1}$-orbits of points in $K^-$ are actually bounded. Let's assume that $|b|=1+\delta$, $\delta>0$. 
\begin{proposition}\label{jd} $(i)$ There exists  $R_0> 1$ such that for all $R>R_0$, we have $H^{-1}(V^+)\subset V^+$ and if $w=(x,y,z)\in V^+$ then
$$ C_1|z|^3<|z_1|<C_2 |z|^3 $$
 where $C_1$ and $C_2$ depend on  $\delta$.

$(ii)$ $||H^{-n}(w)||$ is bounded when $w\in K^{-}.$
\end{proposition}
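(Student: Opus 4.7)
This is a direct calculation from the formula
\[z_1 = \frac{z^3}{ab} + \frac{x}{a} - \frac{yz}{ab}.\]
The defining bounds of $V^+$, namely $|x|<|z|$ and $|y|<|z|^2/(1+\delta)^2$ (together with $|z|>R$), show that the cubic term $z^3/(ab)$ dominates; the triangle inequality gives $|z_1|\le C_2|z|^3$, while the reverse triangle inequality together with $R_0$ large enough gives $|z_1|\ge C_1|z|^3$ with $C_1>0$ depending only on $\delta, a, b$. To verify $H^{-1}(V^+)\subset V^+$ one checks the three defining inequalities for the image: $|z_1|>R$ follows from $|z_1|\ge C_1R^3$; $|z_1|>|x_1|=|z|$ from $C_1|z|^3>|z|$; and $|z_1|>(1+\delta)|y_1|^{1/2}$ from the easy bound $|y_1|\le (|y|+|z|^2)/(1+\delta)\le C|z|^2$, so that $(1+\delta)|y_1|^{1/2}$ is only linear in $|z|$ and hence dominated by $C_1|z|^3$ once $R_0$ is large.

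\textbf{Part (ii).} Using (i), $U^-$ is invariant under both $H$ and $H^{-1}$: $H(U^-)\subset U^-$ is automatic from the definition, and $H^{-1}(U^-)\subset U^-$ follows from $H^{-1}(V^+)\subset V^+$. Hence $K^-$ is fully invariant, and for $w\in K^-$ the orbit $p_n=H^{-n}(w)=(x_n,y_n,z_n)$ satisfies $p_n\notin V^+$ for all $n\ge 0$, i.e.\ at least one of $|z_n|\le R$, $|z_n|\le |x_n|$, or $|z_n|^2\le (1+\delta)^2|y_n|$ holds. Combined with $x_{n+1}=z_n$ and the identity $by_{n+1}=y_n-z_n^2$, which (using $|b|=1+\delta$) gives $(1+\delta)|y_{n+1}|\le |y_n|+|x_{n+1}|^2$, a telescoping argument yields
\[
 (1+\delta)^n|y_n|\;\le\;|y_0|\;+\;\sum_{k=1}^n (1+\delta)^{k-1}|x_k|^2.
\]
A uniform bound $|x_n|\le M$ therefore gives $|y_n|\le |y_0|+M^2/\delta$, and then a uniform bound on $|z_n|$ via the alternatives above. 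The problem thus reduces to bounding $|x_n|$.

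For the remaining bound on $|x_n|$, I argue by contradiction through the projective compactification. If $\|p_n\|$ is unbounded, pass to a subsequence with $p_{n_k}/\|p_{n_k}\|$ converging in $\mathbb P^3$ to a point $q$ on the hyperplane at infinity. The rational extension of $H^{-1}$ to $\mathbb P^3$ sends $\{t=0\}\setminus\{t=z=0\}$ onto the single point $[0:0:1:0]$, so if $q\notin\{t=z=0\}$ then $p_{n_k+1}$ converges projectively to $[0:0:1:0]$ and lies in $V^+$ for large $k$, contradicting $p_{n_k+1}\in K^-$. The remaining case $q\in\{t=z=0\}$ (so $|z_{n_k}|=o(\|p_{n_k}\|)$) is the main obstacle: here $\tilde H^{-1}$ is not defined at $q$ and one must resolve the indeterminacy by rescaling. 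The leading asymptotics $x_{n_k+1}=z_{n_k}=o(\|p_{n_k}\|)$, $y_{n_k+1}\sim y_{n_k}/b$, $z_{n_k+1}\sim x_{n_k}/a$ show that $p_{n_k+1}$ accumulates in $\{t=x=0\}\setminus\{t=x=z=0\}$, on which a further iterate of $\tilde H^{-1}$ reaches $[0:0:1:0]$, yielding the contradiction. The assumption $|b|>1$ enters critically here via the contraction of $y$ by the factor $1/(1+\delta)$, preventing the orbit from drifting along $\{t=z=0\}$ indefinitely; purely Lyapunov-type approaches (for instance $L_n=|y_n|+\alpha|x_n|^2$) fail because the third alternative above permits such quantities to grow by a factor $(1+(1+\delta)^2)/(1+\delta)>1$ per step.
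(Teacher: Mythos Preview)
Your Part (i) is fine and matches the paper's direct computation.

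Your Part (ii), however, has a genuine gap in the projective case $q\in\{t=z=0\}$. The asserted ``leading asymptotics'' $y_{n_k+1}\sim y_{n_k}/b$ and $z_{n_k+1}\sim x_{n_k}/a$ are not justified: knowing only $|z_{n_k}|=o(\|p_{n_k}\|)$ does \emph{not} force $|z_{n_k}|^2=o(|y_{n_k}|)$ (take e.g.\ $|z_{n_k}|\asymp\|p_{n_k}\|^{1/2}$), so $y_{n_k+1}=(y_{n_k}-z_{n_k}^2)/b$ need not be $\sim y_{n_k}/b$, and similarly the cross term $z_{n_k}y_{n_k+1}$ in $z_{n_k+1}$ need not be negligible against $x_{n_k}$. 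Consequently you have not shown that $p_{n_k+1}$ accumulates in $\{t=x=0\}\setminus\{t=x=z=0\}$; in particular you have not excluded accumulation at $[0:1:0:0]\in I^-$, where a further application of $\tilde H^{-1}$ is again undefined. The heuristic that $|b|>1$ ``prevents drifting along $\{t=z=0\}$'' is exactly the content of what must be proved, and your argument does not supply it.

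More importantly, your dismissal of Lyapunov-type approaches is premature. The paper's proof is precisely such an argument, with the right choice of function: set
\[
M_n=\max\{R,\,|x_n|,\,(1+\delta)|y_n|^{1/2}\},
\]
so that $w_n\in K^-$ reads $|z_n|\le M_n$. One then shows $M_{n+1}\le M_n$ directly. The trick you missed is the identity $y_{n+1}=x_n/z_n-az_{n+1}/z_n$ (valid when $z_n\neq 0$), which for $|z_n|>1$ gives $|y_{n+1}|\le M_n+|a|M_{n+1}$; combined with $|x_{n+1}|=|z_n|\le M_n$ this yields $M_{n+1}\le\max\{M_n,(1+\delta)(M_n+|a|M_{n+1})^{1/2}\}$, and the quadratic alternative forces $M_{n+1}<M_n$ once $R$ is large. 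The case $|z_n|\le 1$ is handled by the crude bound $|y_{n+1}|\le(|y_n|+1)/|b|$. Your candidate $L_n=|y_n|+\alpha|x_n|^2$ fails because it weights $y$ linearly; the scaling $|y|^{1/2}$ built into $V^+$ is exactly what makes the inequality close up, and replacing the sum by a maximum avoids the growth factor you computed.
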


\begin{proof} Let $\alpha>0$ be a constant satisfying $\alpha+\frac{1}{(1+\delta)^2}<1$ and $R_0=\left(\frac{|b|}{\alpha}\right)^{\frac{1}{2}}$. First we note that on $V^+$, 

\begin{eqnarray}\label{med} |x|<|z|,\; |y|<\frac{|z|^2}{(1+\delta)^2} \;\text{and}\; |z|>R>R_0=\left(\frac{|b|}{\alpha}\right)^{\frac{1}{2}}.
\end{eqnarray}
 
It follows from  (\ref{med}) that on $V^+$ we have 
\begin{eqnarray*}\left|z_1-\frac{z^3}{ab}\right|&=&\left|\frac{x}{a}-\frac{zy}{ab}\right |<\frac{|z|}{|a|}+\frac{|z|^3}{(1+\delta)^2|ab|}\\&<&\left(\frac{|b|}{|z|^2}+\frac{1}{(1+\delta)^2} \right)\frac{|z|^3}{|ab|}  \leq\left(\alpha+\frac{1}{(1+\delta)^2} \right)\frac{|z|^3}{|ab|},
\end{eqnarray*}
which implies the estimate in part $(i)$. 

On $V^+$, $R<|z|$, $|x_1|=|z|$ and 
\begin{eqnarray*}(1+\delta)|y_1|^{1/2}&\leq& \frac{(1+\delta)}{|b|^{1/2}}|y-z^2|^{1/2}\leq \frac{(1+\delta)}{|b|^{1/2}}\left(\frac{|z|^2}{(1+\delta)^2}+|z|^2  \right)^{1/2}\\&=&\frac{(1+\delta)}{|b|^{1/2}}\left(1+\frac{1}{(1+\delta)^2} \right)^{1/2}|z|.
\end{eqnarray*}
Since $|z_1|>C_1|z|^3$, these estimates imply that $H^{-1}(V^+)\subset V^+$ when $R$ is big enough.

We will show part (ii) now. By definition, on $K^{-}$ we have that   $$|z_n|\leq \max \{R,|x_n|,(1+\delta)|y_n|^{\frac{1}{2}}\}=:M_n$$ for all $n\geq0$ and   $|x_{n+1}|=|z_n|\leq M_n$.

Let us consider the case when $|z_n|\leq 1.$ Then 
\begin{eqnarray*}(1+\delta)|y_{n+1}|^{\frac{1}{2}}\leq (1+\delta)\left(\frac{|y_n|}{|b|}+\frac{1}{|b|}\right)^\frac{1}{2}&<&(1+\delta)\left(\frac{M_n^2}{(1+\delta)^2|b|}+\frac{1}{|b|}\right)^\frac{1}{2}\\ &=& \left(\frac{M_n^2}{|b|}+\frac{(1+\delta)^2}{|b|}    \right)^\frac{1}{2}      \\ &\leq&  M_n\left(\frac{1}{|b|}+\frac{(1+\delta)^2}{|b|R^2}\right)^\frac{1}{2}<M_n . \end{eqnarray*} 
When $|z_n|>1$ 
\begin{eqnarray*}(1+\delta)|y_{n+1}|^{\frac{1}{2}}= (1+\delta)\left|\frac{x_n}{z_n}-\frac{az_{n+1}}{z_n}\right |^\frac{1}{2}  &<&(1+\delta)\left(M_n+|a|M_{n+1}\right)^\frac{1}{2}. \end{eqnarray*} 
We also have that $|x_{n+1}|=|z_n|\leq M_n$. Hence 
 $$M_{n+1}\leq\max\{M_n,(1+\delta)(M_n+|a|M_{n+1})^{1/2}\}.$$ 
 If the right hand side of the above inequality is equal to $M_n$ then $M_{n+1}\leq M_n$ and we are done. Hence we can assume that $M^2_{n+1}\leq (1+\delta)^2(M_n+|a|M_{n+1}).$   Choosing $R$ big enough we obtain that   $$M_{n+1}<\frac{M_{n+1}^2-(1+\delta)^2|a|M_{n+1}}{(1+\delta)^2}<M_n.$$ Therefore $||H^n(w)||$ is bounded for $w \in K^-.$ 
\end{proof}

We now construct the Green's function of $H^{-1}.$ Note that $\deg H^{-n}=3^n$.   We let 
\begin{eqnarray*} G_n(w)=\frac{1}{3^n}\log^+||H^{-n}(w)|| \;\text{and}\; \tilde G_n(w)=\frac{1}{3^n}\log^+|z_n|.
\end{eqnarray*}
 The estimates in \cite[Lemma 6.1, Lemma 6.2]{CF} imply that  $G_n$ and $\tilde G_n$ converge locally uniformly to the same Green's function $G^{-}$. Hence the Green's function $G^-$ is pluriharmonic on $U^-$, $K^-=\{G^-=0\}$ and $G^-\circ H^{-1}=3G^-$. We define the Green's current by $\mu^-=dd^cG^-$. Then $H^*\mu^-=\frac{1}{3}\mu^-$ and supp $ \mu^-=\partial K^-$. 

\section{Dynamics of $H$} \label{dynamicsofH} 
 
 We will denote the $nth$ iteration of $H$ by $H^n(w)=w_n=(x_n,y_n,z_n).$
For $\epsilon>0$ and $R$ big enough, we define the sets
\begin{eqnarray}\label{K+defn}
V^-&=&\left\{(x,y,z)\in \mathbb C^3:|xy|>\max\left\{R,2|az|,|x|^{3/2},\frac{1}{\epsilon}|y|^{3/2}\right\}\right\}, \nonumber \\
U^+&=&\cup_{n=0}^{\infty}H^{-n}(V^-), \\
K^+&=&\mathbb C^3 \setminus U^+.\nonumber
\end{eqnarray}

In  \cite{CF},  they showed that the orbits $H^n(w)$ of points in $U^+$ escape to infinity with super-exponential  growth rate $(const)^{2^n}.$ For the sake of completeness, we give a short proof of this fact for our simplified map.

\begin{theorem} \label{gb}There exists $\epsilon>0$ and $R_0>1/ \epsilon^{10}$ such that for all $R>R_0$, we have $H(V^-)\subset V^-$ and

\begin{eqnarray} \label{6.3}
\frac{|xy|}{2}&<&|x_1|<\frac{3|xy|}{2} \\ (1-|b|\epsilon^2)|x|^2 &<& |y_1|< (1+|b|\epsilon^2) |x|^2. \nonumber
\end{eqnarray}
Hence on $U^+$, $H^n(w)$ escape to infinity with the super-exponential growth $(const)^{2^n}$.
%$(ii)$ There is a positive continuous function $M$ on $\mathbb C^3$ such that for all $w\in K^+$ and $n\geq 0$, we have $$\max\{|x_n|, |y_n|, |z_n|\}\leq M(w)^{(\sqrt {3})^n}. $$
\end{theorem}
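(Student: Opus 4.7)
The plan is to verify the invariance $H(V^-)\subset V^-$ by checking each of the four conditions defining $V^-$ at the image point $(x_1,y_1,z_1)=(xy+az,\, x^2+by,\, x)$. The first step is to extract a priori lower bounds on $|x|$ and $|y|$ from the definition of $V^-$ and the hypothesis $R>1/\epsilon^{10}$: the conditions $|xy|>|x|^{3/2}$ and $|xy|>\frac{1}{\epsilon}|y|^{3/2}$ translate to $|y|>|x|^{1/2}$ and $|y|<\epsilon^2|x|^2$, which combined with $|xy|>R>1/\epsilon^{10}$ yield $\epsilon^2|x|^3>|xy|>1/\epsilon^{10}$, so $|x|>1/\epsilon^{4}$ and hence $|y|>1/\epsilon^{2}$. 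These lower bounds drive everything else.

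The two displayed inequalities on $|x_1|$ and $|y_1|$ are then immediate. The bound $|xy|>2|az|$ gives $|xy|/2<|xy+az|<3|xy|/2$ by the triangle inequality, and the bound $|by|\le |b|\epsilon^2|x|^2$ (coming from $|y|<\epsilon^2|x|^2$) similarly yields the claimed inequalities on $|y_1|=|x^2+by|$.

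For the invariance, I combine the two to get the master estimate $|x_1 y_1|>\frac{1-|b|\epsilon^2}{2}|xy|\,|x|^2$ and then check the four max conditions one by one. The first two, $|x_1 y_1|>R$ and $|x_1 y_1|>2|az_1|=2|a|\,|x|$, are immediate since $|xy|>R$ and $|x|^2$ is already huge. For $|x_1 y_1|>|x_1|^{3/2}$, I use $|x_1|<3|xy|/2$ to reduce the claim to an inequality of the form $|x|^3>C|y|$, which follows from $|x|^2>|y|/\epsilon^2$ together with $|x|>1/\epsilon^{4/3}$. The most delicate condition is $|x_1 y_1|>\frac{1}{\epsilon}|y_1|^{3/2}$; after inserting $|y_1|<(1+|b|\epsilon^2)|x|^2$ and dividing by $|x|^2$ it reduces to $|y|>C(\epsilon,|b|)/\epsilon$ for an explicit constant $C$, and this is exactly the place where the sharp lower bound $|y|>1/\epsilon^2$ (and therefore the hypothesis $R>1/\epsilon^{10}$) is essential. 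I expect this to be the main obstacle, since all other inequalities have considerable slack.

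Finally, for the super-exponential growth on $U^+$, set $a_n=\log|x_n|$ and $b_n=\log|y_n|$. Iterating the two displayed estimates gives $a_{n+1}\ge a_n+b_n-\log 2$ and $b_{n+1}\ge 2a_n+\log(1-|b|\epsilon^2)$, so $(a_n,b_n)$ is bounded below by the iteration of the matrix $\bigl(\begin{smallmatrix}1&1\\2&0\end{smallmatrix}\bigr)$, whose characteristic polynomial $\lambda^2-\lambda-2$ has roots $2$ and $-1$. The dominant eigenvalue $2$ yields $\|H^n(w)\|\ge c^{2^n}$ on $V^-$ for a constant $c>1$ depending on the starting point, and this propagates to all of $U^+=\bigcup_n H^{-n}(V^-)$ by $H$-invariance.
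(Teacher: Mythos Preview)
Your proof is correct and follows essentially the same route as the paper: extract the lower bounds $|x|>1/\epsilon^4$ and $|y|>1/\epsilon^2$ from the defining inequalities of $V^-$ and $R>1/\epsilon^{10}$, derive the two displayed estimates by the triangle inequality, and then use the master bound $|x_1y_1|>\tfrac{1-|b|\epsilon^2}{2}|x|^3|y|$ to verify each of the four conditions for $(x_1,y_1,z_1)\in V^-$. Your explicit eigenvalue argument with the matrix $\bigl(\begin{smallmatrix}1&1\\2&0\end{smallmatrix}\bigr)$ for the growth rate $(\text{const})^{2^n}$ is a welcome addition, since the paper simply asserts this consequence without detail.
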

\begin{proof} For the points in $V^-$, we have that $|2az|<|xy|$. Hence $|x_1-xy|=|az|< |xy|/2$ which proves the first inequality above.  On $V^-$, $|xy|>\frac{1}{\epsilon}|y|^{\frac{3}{2}}$ which implies that $|y|<\epsilon^2|x|^2.$ Thus $$|y_1-x^2|=|by|\leq|b|\epsilon^2|x|^2$$ and this implies the second inequality above. We now prove that $V^-$ is invariant under $H$. On $V^-$, $|xy|>|x|^{\frac{3}{2}}$ which implies that $|x|<|y|^2$. Since  $\epsilon^2|x|^3>|xy|>R>1/\epsilon^{10}$, 
\begin{eqnarray}\label{xybig}|x|>\frac{1}{\epsilon^4}\; \text{and}\; |y|>\frac{1}{\epsilon^2}
\end{eqnarray}
 on $V^-$. Using this with the first inequality in (\ref{6.3}), we obtain that  
 \begin{eqnarray*}|x_1y_1|>|x|^3|y|\frac{1-|b|\epsilon^2}{2}&=& 2|a||z_1||x|^2|y|\frac{1-|b|\epsilon^2}{4|a|}\\&\geq& 2|a||z_1|\frac{1-|b|\epsilon^2}{\epsilon^{10}4|a|}>2|az_1|.
 \end{eqnarray*}
 
 (\ref{6.3}) and (\ref{xybig}) with the inequality $|y|<\epsilon^2|x|^2$ imply that  
\begin{eqnarray*}\max\left\{|x_1|^\frac{3}{2},\frac{1}{\epsilon}|y_1|^{\frac{3}{2}}\right\}&\leq&
 \max\left\{\left(\frac{3|xy|}{2}\right)^{\frac{3}{2}}, \frac{1}{\epsilon}\left(1+|b|\epsilon^2\right)^{\frac{3}{2}} |x|^3 \right\}\\& \leq& \max\left\{ \epsilon \left(\frac{3}{2}\right)^{3/2} |x|^{5/2} |y|,  \frac{1}{\epsilon} (1+|b|\epsilon^2)^{3/2} |x|^3\right \} \\&\leq& |x|^3|y|\max\left\{\epsilon^3 \left(\frac{3}{2}\right)^{3/2},  \epsilon\left(1+|b|\epsilon^2\right)^{\frac{3}{2}}\right \}\leq|x|^3|y| \frac{1-|b|\epsilon^2}{2}<|x_1y_1|. 
 \end{eqnarray*}
Thus $H(V^-)\subset V^-$. 
\end{proof}
We now discuss the dynamics of $H$  on $K^+$. Let 
\begin{eqnarray}\label{M_n} M_n=M_n(w):=\max\left\{R,2|az_n|,|x_n|^{3/2},\frac{1}{\epsilon}|y_n|^{3/2}\right\}.
\end{eqnarray}

\begin{lemma}\label{2}On $K^+$,  if $R$ is sufficiently large, then we have  \\
(i) $|x_n|\leq\frac{3M_{n-1}}{2}$,\\
(ii) $M_n\leq\max\{(\frac{3M_{n-1}}{2})^{\frac{3}{2}},\frac{1}{\epsilon}|y_n|^{\frac{3}{2}}\}$,\\
(iii) $M_n\leq C\max\{M^{\frac{3}{2}}_{n-1},|x_{n-1}|^3\}$ for some constant $C$,\\
(iv) $M_n\leq \tilde{M}(w)^{(\sqrt 3)^n}$ for some continuous function  $\tilde{M}(w)$. Hence $||H^n(w)||\leq C(w)^{(\sqrt 3)^n}$ where $C(w)$ is a continuous function of $w$. 
\end{lemma}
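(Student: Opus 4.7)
The key observation at the start is that membership in $K^+$ means no iterate $w_k = H^k(w)$ lies in $V^-$, so by the definition of $V^-$ one has $|x_k y_k| \leq M_k$ for every $k \geq 0$. This is the only way that the $K^+$-hypothesis enters the argument; once it is in hand, the four statements are proved by direct computation using $x_n = x_{n-1}y_{n-1} + a z_{n-1}$, $y_n = x_{n-1}^2 + b y_{n-1}$, $z_n = x_{n-1}$, together with the inequalities $2|az_n| \leq M_n$, $|x_n|^{3/2} \leq M_n$, $|y_n|^{3/2} \leq \epsilon M_n$ built into the definition of $M_n$.

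For (i) I would simply write $|x_n| \leq |x_{n-1}y_{n-1}| + |az_{n-1}| \leq M_{n-1} + \tfrac{1}{2}M_{n-1}$. For (ii), I need to bound each of the four quantities appearing in $M_n$ by $\max\{(\tfrac{3}{2}M_{n-1})^{3/2}, \tfrac{1}{\epsilon}|y_n|^{3/2}\}$: the first term $R$ is dominated by $M_{n-1}^{3/2}$ since $M_{n-1} \geq R$ and $R$ is large; the term $2|az_n| = 2|a||x_{n-1}|$ is controlled using $|x_{n-1}| \leq M_{n-1}^{2/3}$, which is much smaller than $M_{n-1}^{3/2}$ when $R$ is large; the term $|x_n|^{3/2}$ is handled by part (i); and $\tfrac{1}{\epsilon}|y_n|^{3/2}$ is left as the alternative.

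For (iii), the only new estimate is on $|y_n|^{3/2}$. From $y_n = x_{n-1}^2 + b y_{n-1}$ and $|y_{n-1}| \leq (\epsilon M_{n-1})^{2/3}$ one gets $|y_n| \leq |x_{n-1}|^2 + |b|(\epsilon M_{n-1})^{2/3}$; applying the elementary inequality $(u+v)^{3/2} \leq 2^{3/2}(u^{3/2}+v^{3/2})$ and using $M_{n-1} \geq R \geq 1$ to absorb the linear term into $M_{n-1}^{3/2}$, one finds $\tfrac{1}{\epsilon}|y_n|^{3/2} \leq C'\max\{|x_{n-1}|^3, M_{n-1}^{3/2}\}$. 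Combining with (ii) gives the claim, possibly after enlarging $C$.

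Part (iv) is where the exponent $\sqrt{3}$ finally appears, and it is the main (though still routine) obstacle. Setting $L_n = \log M_n$, parts (i) and (iii) combine to give a recursion of the form
\begin{equation*}
L_n \leq A + \max\bigl\{\tfrac{3}{2} L_{n-1},\; 3 L_{n-2}\bigr\}
\end{equation*}
after using $|x_{n-1}| \leq \tfrac{3}{2} M_{n-2}$ from (i) to rewrite $|x_{n-1}|^3 \leq (\tfrac{3}{2})^3 M_{n-2}^3$. Seeking the smallest $\lambda$ with $\lambda \geq \tfrac{3}{2}$ and $\lambda^2 \geq 3$ yields $\lambda = \sqrt 3$ (note $\tfrac{3}{2} < \sqrt 3$), so a standard induction produces constants $\alpha,\beta>0$ depending continuously on the initial data $L_0, L_1$ such that $L_n \leq \alpha + \beta(\sqrt 3)^n$. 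Setting $\tilde M(w) = \exp(\alpha + \beta)$ (or any convenient continuous majorant built from $M_0(w)$ and $M_1(w)$) gives $M_n \leq \tilde M(w)^{(\sqrt 3)^n}$, and since $\|H^n(w)\|$ is controlled by $M_n$ up to a multiplicative constant, the final statement $\|H^n(w)\| \leq C(w)^{(\sqrt 3)^n}$ follows with $C(w)$ continuous in $w$.
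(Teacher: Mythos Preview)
Your argument is correct and, for parts (i)--(iii), essentially identical to the paper's: the same triangle inequality for $|x_n|$, the same use of $|z_n|=|x_{n-1}|\leq M_{n-1}^{2/3}$ for (ii), and the same elementary bound $(u+v)^{3/2}\leq 2^{3/2}\max\{u^{3/2},v^{3/2}\}$ applied to $y_n=x_{n-1}^2+by_{n-1}$ for (iii).

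The only real difference is in (iv). The paper invokes an additional coarse estimate, namely $M_{n-1}\leq C M_{n-2}^2$ coming from the fact that $H$ has degree two, and substitutes it into the $M_{n-1}^{3/2}$ branch of (iii); since $(M_{n-2}^2)^{3/2}=M_{n-2}^3$, both branches collapse to $M_n\leq \tilde C M_{n-2}^3$, and then one simply iterates along even and odd indices separately. You instead keep the mixed two-term recursion $L_n\leq A+\max\{\tfrac{3}{2}L_{n-1},3L_{n-2}\}$ and read off the growth rate $\sqrt 3$ as the smallest $\lambda$ with $\lambda\geq\tfrac{3}{2}$ and $\lambda^2\geq 3$. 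After absorbing the additive constant $A$ (e.g.\ replace $M_n$ by $C^2 M_n$ so the recursion becomes homogeneous), the induction $P_n\leq\beta(\sqrt3)^n$ goes through cleanly. Your route avoids the extra degree-two input and is arguably a little more direct; the paper's route gives an explicit formula $\tilde M(w)=\max\{\tilde C M_0,(\tilde C M_1)^2\}$ without any recursion analysis. Both are short and valid.
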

\begin{proof} 

(i) Note that on $K^+$, $|x_ny_n|\leq M_n$ for all $n\geq 0$. It follows from the definition of map $H$ and the set $M_n$ that $$|x_n|\leq|x_{n-1}y_{n-1}|+|az_{n-1}|\leq M_{n-1}+ \frac{M_{n-1}}{2}=\frac{3M_{n-1}}{2}.$$

(ii) The inequality $|z_n|=|x_{n-1}|\leq M_{n-1}^{\frac{2}{3}}$ with the estimate in $(i)$ implies that   
\begin{eqnarray*}
M_n&\leq&\max\left\{M_{n-1},2|a|M_{n-1}^{\frac{2}{3}},\left(\frac{3M_{n-1}}{2}\right)^{\frac{3}{2}},\frac{1}{\epsilon}|y_n|^{\frac{3}{2}}\right\}\\&=&\max\left\{ \left(\frac{3M_{n-1}}{2}\right)^{\frac{3}{2}},\frac{1}{\epsilon}|y_n|^{\frac{3}{2}}\right\}.
\end{eqnarray*}

(iii) By definition of $y_n$ \begin{eqnarray*} |y_n|^{\frac{3}{2}}&\leq&(|x_{n-1}|^2+|b||y_{n-1}|)^{\frac{3}{2}}\\ &\leq& 2^{\frac{3}{2}}\max\left\{|x_{n-1}|^3,|b|^\frac{3}{2}|y_{n-1}|^\frac{3}{2} \right\}.\end{eqnarray*} Then $$\frac{1}{\epsilon}|y_n|^\frac{3}{2}\leq2^{\frac{3}{2}}\max\left\{\frac{1}{\epsilon}|x_{n-1}|^3,|b|^\frac{3}{2}|M_{n-1}|\right \}.$$ This estimate with (ii) proves (iii).

(iv) Since $H$ is a degree two polynomial map, $M_{n-1}\leq C M_{n-2}^2$ for some constant $C$ . So by (i) and (iii) we obtain that 
$$M_n\leq C \max\left\{M_{n-2}^3, \left(\frac{3M_{n-2}}{2}\right)^3\right\}=\tilde CM_{n-2}^3.$$
Hence if $n$ is even  then $M_n\leq \tilde CM_{n-2}^3\leq ...\leq (\tilde C M_0)^{(\sqrt{3})^n}. $ If $n$ is odd then $M_n\leq (\tilde CM_1)^{3^{\frac{n-1}{2}}}\leq ((\tilde CM_1)^2)^{(\sqrt{3})^n}$.  Thus $M_n\leq \tilde{M}(w)^{(\sqrt 3)^n}$ where 
$\tilde{M}(w)=\max\{ \tilde C M_0,  (\tilde CM_1)^2\}.$

\end{proof}

We now define the Green's function $G^+$ of $H$ by $$G^+(w)=\lim_{n\to\infty}\frac{1}{2^n}\log^+||H^n(w)||=\lim_{n\to\infty}\frac{1}{2^n}\log^+|x_n|=\lim_{n\to\infty}\frac{1}{2^n}\log^+|y_n|. $$ 

\begin{theorem}\cite{CF} The above limits exist and are equal. $G^+$ is a continuous psh function in $\mathbb C^3$ and it is pluriharmonic on $U^+$. Moreover, $K^+=\{G^+=0\}$ and $G^+\circ H=2G^+$.  
\end{theorem}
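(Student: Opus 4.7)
The plan is to prove the theorem by showing that the sequence $G_n^+(w) := 2^{-n}\log^+\|H^n(w)\|$ converges locally uniformly on $\mathbb{C}^3$ to a continuous plurisubharmonic function $G^+$, splitting the analysis according to whether $w$ eventually lands in the escape region $V^-$ (i.e.\ $w \in U^+$) or never does (i.e.\ $w \in K^+$, where Lemma \ref{2}(iv) already provides a sub-$2^n$ growth bound).

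The heart of the argument is on $V^-$. Setting $a_n(w) = \log|x_n(w)|$ and $b_n(w) = \log|y_n(w)|$, Theorem \ref{gb} recasts as
\[
\begin{pmatrix} a_{n+1} \\ b_{n+1} \end{pmatrix} = A \begin{pmatrix} a_n \\ b_n \end{pmatrix} + \eta_n, \qquad A = \begin{pmatrix} 1 & 1 \\ 2 & 0 \end{pmatrix},
\]
with $\|\eta_n\|\le C_0$ for a constant depending only on $\epsilon,|a|,|b|$. The matrix $A$ has eigenvalues $2$ and $-1$ with dominant eigenvector $(1,1)$, so the renormalized powers $(A/2)^m$ remain bounded in norm. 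Unfolding the recursion and dividing by $2^n$,
\[
\frac{1}{2^n}\begin{pmatrix} a_n \\ b_n \end{pmatrix} = \frac{A^n}{2^n}\begin{pmatrix} a_0 \\ b_0 \end{pmatrix} + \sum_{k=0}^{n-1} \frac{A^{n-1-k}}{2^n}\,\eta_k,
\]
and the series converges since $\|A^{n-1-k}/2^n\| = O(2^{-k-1})$. Hence $a_n/2^n$ and $b_n/2^n$ both converge locally uniformly on $V^-$ to the same limit $G^+$, namely the $(1,1)$-component of $(a_0,b_0)$ plus an absolutely convergent error series in the $\eta_k$; since $|z_n|=|x_{n-1}|$ is of lower order, $G_n^+$ converges to the same $G^+$. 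Because $x_n$ has no zero on $V^-$, each $2^{-n}\log|x_n|$ is pluriharmonic there, and $G^+$ inherits pluriharmonicity on $V^-$ as a locally uniform limit.

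To extend to $U^+$, for $w\in U^+$ let $k$ be minimal with $H^k w \in V^-$; then
\[
G_n^+(w) = 2^{-k}\cdot 2^{-(n-k)}\log^+\|H^{n-k}(H^k w)\| \longrightarrow 2^{-k} G^+(H^k w),
\]
which simultaneously yields convergence on $U^+$, the functional equation $G^+\circ H = 2G^+$, and pluriharmonicity on $U^+$. On $K^+$, Lemma \ref{2}(iv) gives $G_n^+(w) \le (\sqrt{3}/2)^n \log^+ C(w) \to 0$ locally uniformly, so $G^+\equiv 0$ on $K^+$. The reverse inclusion $\{G^+=0\}\subset K^+$ follows since on $V^-$ the $(1,1)$-component of $(a_0,b_0)$ is strictly positive (both $\log|x|$ and $\log|y|$ are large), giving $G^+>0$ on $V^-$, and hence on all of $U^+$ via the functional equation.

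The main obstacle will be promoting the two separate convergences into a single locally uniform convergence on $\mathbb{C}^3$, so that continuity and plurisubharmonicity pass to the limit globally—in particular across $\partial K^+$, where the entry time $k$ into $V^-$ diverges. The resolution is a uniform Cauchy tail bound of the form $|G_{n+1}^+(w) - G_n^+(w)| \le C(K)\, 2^{-n}$ on any compact $K\subset\mathbb{C}^3$: either $w \in K^+$ and Lemma \ref{2}(iv) applies directly, or the orbit of $w$ has already entered $V^-$ by step $n$ and the summable tail $\sum_{k\ge n} C_0\, 2^{-k-1}$ from the matrix estimate controls successive differences. Each $G_n^+$ is continuous psh on $\mathbb{C}^3$ as $\log^+$ of the norm of the holomorphic polynomial map $H^n$, so the uniform limit $G^+$ is continuous psh, completing the proof.
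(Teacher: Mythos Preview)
The paper does not supply its own proof of this theorem; it simply records the statement and cites \cite{CF}. Your proposal therefore goes well beyond what the paper does, giving a self-contained argument built from the estimates already established in Theorem~\ref{gb} and Lemma~\ref{2}. The matrix linearization on $V^-$ with $A=\begin{pmatrix}1&1\\2&0\end{pmatrix}$, eigenvalues $2,-1$, is exactly the right mechanism: it explains simultaneously the $2^n$ rate, why the limits in $x_n$ and $y_n$ coincide (both project to the $(1,1)$-eigendirection), and why $G^+$ is pluriharmonic on $V^-$ and hence on $U^+$. The use of Lemma~\ref{2}(iv) to kill $G^+$ on $K^+$, and the lower bound $G^+>0$ on $V^-$ via positivity of the $(1,1)$-component $c_0=(2\log|x|+\log|y|)/3$, are also sound.

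There is one genuine imprecision in your uniform Cauchy estimate that you should tighten. Your stated dichotomy ``either $w\in K^+$, or the orbit has already entered $V^-$ by step $n$'' omits the case $w\in U^+$ with entry time $n_0(w)>n$; near $\partial K^+$ this is the generic situation. The fix is to observe that the proof of Lemma~\ref{2} only uses $|x_jy_j|\le M_j$ for $j\le n$, i.e.\ only that the first $n$ iterates lie outside $V^-$, so its bound $\log^+\|H^n(w)\|\le(\sqrt3)^n\log\tilde M(w)$ applies equally well to such $w$. Relatedly, the consecutive-difference rate you claim, $C(K)\,2^{-n}$, is too optimistic: on the pre-entry steps Lemma~\ref{2}(iv) only yields $G_n^+\le C(K)(\sqrt3/2)^n$, and after entry the $(-1)$-eigencomponent $d_n$ of $(a_n,b_n)/2^n$ contributes $|d_n|\lesssim 2^{-(n-n_0)}|d_{n_0}|\lesssim C(K)(\sqrt3/2)^n$ rather than $2^{-n}$. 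Replacing $2^{-n}$ by $(\sqrt3/2)^n$ throughout still gives a summable, locally uniform Cauchy tail, so the conclusion survives; just state the correct rate.
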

 The Green's current of $H$ is defined by $\mu^+=dd^c G^+.$ Then $H^*\mu^+=2\mu^+$ and  supp $\mu^+=\partial K^+$.  Let us consider the  induced map $H$ on $\mathbb P^3$ and the Fubini-Study form $\omega$  on $\mathbb P^3$. For a more general class of automorphisms, Sibony (\cite[Theorem 1.6.1]{S}) showed that $\frac{1}{2^n}(H^n)^*\omega$ converges to a   closed positive current $T_+$ of bidegree $(1,1)$ which satisfies $H^* T_+=2T_+$ on $\mathbb P^3$.  Moreover, by \cite[Theorem 1.8.1]{S}, $T_+$ does not charge the hyperplane at infinity and  $T_+|_{\mathbb C^3}=\mu^+$ has mass one in $\mathbb C^3$.  

Unlike the regular automorphisms (see \cite {S} for regular automorphisms), orbits of points in $K^+$ may escape to infinity. For example, $H(0,y,0)=(0,b^ny,0)\to[0:1:0:0]$ if $|b|>1$. By Lemma \ref{2} (iv), any such orbit may escape to infinity with a  smaller super-exponential rate $(const)^{(\sqrt 3)^n}$.  We will show that unbounded orbits of points in $K^+$ may accumulate only at two points, $P=[0:1:0:0]$ and $Q=[1:0:0:0].$ First we show that points in $K^+$ accumulate at infinity on the set $I^+_{\infty}=I^+\cup\{t=y=0\}.$ 

\begin{theorem}\label{K+}$\overline{K^+}=K^+\cup I_{\infty}$
\end{theorem}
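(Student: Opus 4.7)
The plan is to prove both inclusions of $\overline{K^+} = K^+\cup I^+_\infty$, where $I^+_\infty = \{t=x=0\}\cup\{t=y=0\}$. Since $K^+ = \{G^+=0\}$ is closed in $\mathbb C^3$, one has $\overline{K^+}\cap\mathbb C^3 = K^+$, so only the portion at infinity requires analysis.

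For the inclusion $\overline{K^+}\cap\{t=0\}\subseteq I^+_\infty$, I would take any sequence $w_k = (x_k,y_k,z_k)\in K^+$ with $\|w_k\|\to\infty$ and $w_k\to p = [\alpha:\beta:\gamma:0]$ in $\mathbb P^3$, normalized so that $\max(|\alpha|,|\beta|,|\gamma|)=1$. Since $w_k\not\in V^-$, the defining inequality of $V^-$ in~(\ref{K+defn}) at $n=0$ gives
\[ |x_k y_k|\leq \max\{R,\,2|az_k|,\,|x_k|^{3/2},\,\tfrac{1}{\epsilon}|y_k|^{3/2}\}. \]
Setting $s_k := \max(|x_k|,|y_k|,|z_k|)\to\infty$, every term on the right is bounded by $\max(R,\,2|a|s_k,\,s_k^{3/2},\,s_k^{3/2}/\epsilon) = o(s_k^2)$, so dividing by $s_k^2$ and passing to the limit forces $|\alpha\beta|=0$, and hence $p\in I^+_\infty$.

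For the reverse inclusion, the $y$-axis $L:=\{(0,y,0):y\in\mathbb C\}$ is $H$-invariant (since $H(0,y,0)=(0,by,0)$) and entirely contained in $K^+$, because $|x\cdot y|=0$ means no forward iterate enters $V^-$; its projective closure contains $[0:1:0:0]=P$. To obtain the remaining points of $I^+_\infty$, I would perturb $L$ tangent to the stable direction of $DH^2$ along $L$ at infinity (which is approximately the $z$-direction for $|y|$ large, as follows from a direct eigenvalue computation of the linearization), producing one-parameter families in $K^+$ whose projective limits sweep out the line $\{t=x=0\}$; a parallel construction using orbits whose $x$-coordinate dominates then yields $Q=[1:0:0:0]$ and fills out $\{t=y=0\}$.

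The delicate step is this last construction. A direct ansatz such as $w_s=(0,s\beta,s\gamma)$ is insufficient: applying $H$ produces $|x_1 y_1|=|ab|s^2|\beta\gamma|$, which overwhelms the admissible $O(s^{3/2})$ bound coming from $M_1$ and lands the sequence in $V^-$ after only one iterate. A successful construction must choose the $x$-coordinate of each $w_s$ to cancel the leading term of $x_1 = x_s\cdot s\beta+a\cdot s\gamma$ (for instance $x_s=-a\gamma$ annihilates the $O(s)$ part) and then adjust the further coordinates inductively along the local stable manifold at infinity so that $|x_n y_n|\leq M_n$ persists at every step. Verifying that these sequences genuinely lie in $K^+$ while converging in $\mathbb P^3$ to the prescribed point of $I^+_\infty$ is the technical heart of the argument.
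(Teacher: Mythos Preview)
Your argument for the inclusion $\overline{K^+}\cap\{t=0\}\subset I^+_\infty$ is correct and is essentially the paper's computation of $\overline{(V^-)^c}$, just phrased in terms of sequences.

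The reverse inclusion, however, is not proved. You exhibit the $y$-axis as a subset of $K^+$ whose closure contains $P=[0:1:0:0]$, which is fine, but for every other point of $I^+_\infty$ you only sketch a strategy (perturb along a stable direction, cancel leading terms, iterate) and explicitly concede that the verification is ``the technical heart of the argument'' --- and you do not carry it out. As you yourself observe, the naive ansatz lands in $V^-$ after one step; the inductive correction you propose would have to control $|x_ny_n|\le M_n$ for \emph{all} $n\ge 0$ simultaneously, and nothing in your outline shows this can actually be arranged for a sequence converging to an arbitrary prescribed point $[0:\beta:\gamma:0]$ or $[\alpha:0:\gamma:0]$. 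So as written this half of the proof is a plan, not a proof.

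The paper avoids this construction entirely by a current-theoretic argument. Let $\tilde H$ be a homogeneous lift of $H$ to $\mathbb C^4$ and $\tilde G=\lim 2^{-n}\log\|\tilde H^n\|$ the global potential of the Green current $T_+$. For any $p\in I^+_\infty$ one checks that $\tilde H^2$ vanishes at a lift $\tilde p$, so $\tfrac14\log\|\tilde H^2\|$ has a positive Lelong number at $\tilde p$; since $\tilde G\le \tfrac14\log\|\tilde H^2\|$, comparison of Lelong numbers gives $\nu(\tilde G,\tilde p)>0$, hence $p\in\operatorname{supp}T_+$. One then invokes the known inclusion $\operatorname{supp}T_+\subset\partial\overline{K^+}$ (from \cite{CF}) to conclude $I^+_\infty\subset\overline{K^+}$. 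This bypasses any explicit sequence construction and works uniformly for every point of $I^+_\infty$.
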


\begin{proof}Since $$\overline{{V^-}^{c}}=\left\{[x:y:z:t]\in \mathbb P^3:|xy|\leq\max\left\{ R|t^2|,2|azt|,|x|^{\frac{3}{2}}|t|^{\frac{1}{2}},\frac{1}{\epsilon}|y|^{\frac{3}{2}}|t|^{\frac{1}{2}}   \right\} \right\}$$ we have that  $$\overline{{V^-}^{c}}={V^-}^c\cup\{x=t=0\}\cup\{y=t=0 \}={V^-}^c\cup I_{\infty}.$$ 
Then $$\overline{K^+}\subset \overline{{V^-}^{c}}\subset {V^-}^c\cup I^+_{\infty}$$ which implies that $\overline{K^+}\subset K^+\cup I^+_{\infty}. $

Let $\tilde H$ be a homogeneous representation of the extension of  $H$ to $\mathbb P^3$ so that $||\tilde H(w)||\leq ||w||^2$ and $T_+$ be the Green's current of $H$ in $\mathbb P^3.$  By \cite[Theorem 1.6.1]{S}, we have on $\mathbb C^4$, $$\frac{1}{2^n}\log ||\tilde H^n(w)||\searrow \tilde G(w)\; \text{and}\; \pi^*T_+=dd^c\tilde G,$$ where $\pi:\mathbb C^4\setminus \{0\}\to \mathbb P^3$ is the canonical map.  Note that for any  $p\in I^+_{\infty}$, there exists a $\tilde p\in \mathbb C^4\setminus \{0\}$ such that  $\tilde H^2(\tilde p)=0$ and $p=\pi(\tilde p)$. Since $\tilde G(w)\leq \frac{1}{4}\log||\tilde H^2(w)||$, by comparison theorem for Lelong numbers (see \cite {D1}) we have that  $$\nu (\tilde G,\tilde p)\geq \nu\left(\frac{1}{4}\log||\tilde H^2||,\tilde p\right)>0.$$ Hence $\tilde p\in \;\text{supp}\; \pi^* T_+\subset \pi^{-1}(\text{supp}\; T_+)$, that is, $p\in \text{supp}\; T_+.$  \cite[Theorem 6.5]{CF} implies that $\text{supp}\; T_+\subset\partial  \overline {K^+}$, hence $I^+_{\infty}\subset \partial  \overline {K^+}.$
\end{proof}

We should note that the above result is true for weakly regular maps by \cite[Theorem 2.2]{GS} with $I^+_{\infty}$ replaced by $I^+$.  So Theorem \ref {K+} is of interest since the similar result holds for maps which are not weakly regular.

\begin{theorem} \label{orbitK}Unbounded orbits of points in $K^+$ under $H$ can only cluster at $I^+_{\infty}\cap I^-=\{[1:0:0:0],[0:1:0:0]\}.$
\end{theorem}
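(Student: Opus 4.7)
The plan is to argue by contradiction using backward iteration of $H^{-1}$. By Theorem \ref{K+}, every cluster point of an unbounded $H$-orbit lies on $I^+_\infty = \{t=x=0\}\cup\{t=y=0\}$. Since the two target points $[1:0:0:0]$ and $[0:1:0:0]$ are precisely the elements of $I^+_\infty$ with vanishing projective $z$-coordinate, it suffices to rule out cluster points of the form $P=[\alpha:\beta:\gamma:0]$ with $\gamma\neq 0$.

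Suppose toward contradiction that $H^{n_k}(w)\to P=[\alpha:\beta:\gamma:0]$ in $\mathbb{P}^3$ with $\gamma\neq 0$. Setting $s_k:=\max(|x_{n_k}|,|y_{n_k}|,|z_{n_k}|)$, we have $s_k\to\infty$ and $|z_{n_k}|/s_k\to|\gamma|>0$, so $|z_{n_k}|\to\infty$ while $|x_{n_k}|,|y_{n_k}|\leq s_k=O(|z_{n_k}|)$. The strategy is to apply $H^{-1}$ repeatedly to $W_{n_k}:=(x_{n_k},y_{n_k},z_{n_k})$ and establish a triple-exponential lower bound on $Z_j:=|z_{n_k-j}|$, which at $j=n_k$ will contradict the fixed value of $|z_0|$.

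Using the explicit formula for $H^{-1}$, one has
\[y_{n_k-j-1}=\frac{y_{n_k-j}-z_{n_k-j}^2}{b},\qquad z_{n_k-j-1}=\frac{x_{n_k-j}-z_{n_k-j}\,y_{n_k-j-1}}{a}.\]
Under the \emph{dominance conditions} $|y_{n_k-j}|\leq Z_j^2/2$ and $|x_{n_k-j}|\leq Z_j^3/(4|b|)$, the first identity yields $|y_{n_k-j-1}|\in[Z_j^2/(2|b|),\,2Z_j^2/|b|]$, and the second then yields $Z_{j+1}\geq Z_j^3/(4|ab|)$. The dominance conditions propagate by induction: at step $j+1$, $|x_{n_k-j-1}|=Z_j$ and $|y_{n_k-j-1}|\leq 2Z_j^2/|b|$, both dwarfed by the appropriate powers of $Z_{j+1}$ once $Z_0$ exceeds a threshold depending only on $|a|,|b|$. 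The base case ($j=0$) holds because $|x_{n_k}|,|y_{n_k}|\leq s_k\ll Z_0^2,\,Z_0^3$ for large $k$. Iterating the recursion with $C:=(4|ab|)^{1/2}$ gives $Z_j\geq C(Z_0/C)^{3^j}$; setting $j=n_k$,
\[|z_0|=Z_{n_k}\geq C\bigl(|z_{n_k}|/C\bigr)^{3^{n_k}}\longrightarrow\infty\quad\text{as }k\to\infty,\]
contradicting that $|z_0|$ is a fixed initial value of $w$.

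The main obstacle is to verify that the dominance conditions persist through all $n_k$ iterations of $H^{-1}$ with uniformly controlled constants, despite the multiplicative errors introduced at each application. This succeeds because the ratios $|y_{n_k-j}|/Z_j^2$ and $|x_{n_k-j}|/Z_j^3$ decay at a super-exponential rate, so the accumulated error is negligible compared to the cubic growth $Z_j\mapsto Z_{j+1}\sim Z_j^3/|ab|$. Note that this argument makes no use of the hypothesis $w\in K^+$ beyond the fact that the $H^{-1}$-iterates are well defined and lead back to $w$; the triple-exponential growth alone is enough to rule out any cluster at infinity with $\gamma\neq 0$, completing the proof.
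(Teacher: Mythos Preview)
Your argument is correct. It is, however, a different route from the paper's. The paper argues abstractly: if a cluster point $w_0\in I^+_\infty$ lies outside $I^-=\{t=z=0\}$, then the meromorphic extension of $H^{-1}$ is defined at $w_0$ and sends it to $X^-=[0:0:1:0]$; since $H^{-1}$ is weakly regular, $X^-$ is an attracting fixed point for $H^{-1}$, so once $w_{n_i-1}$ enters its basin the backward iterates converge to $X^-$. But $H^{-(n_i-1)}(w_{n_i-1})=w$ is a fixed finite point, a contradiction. Your proof unwinds this attraction by explicit estimates: the dominance conditions you impose are precisely what places $w_{n_k}$ in (a variant of) the filtration set $V^+$ of Section~\ref{dynamicsofH-1}, and your recursion $Z_{j+1}\ge Z_j^3/(4|ab|)$ is the quantitative content of Proposition~\ref{jd}(i). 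So the two proofs rest on the same dynamical fact; the paper invokes it via the weakly regular machinery, while you redo the relevant computation from scratch. The paper's version is shorter and makes clear that the conclusion holds for any automorphism whose inverse is weakly regular; your version is self-contained and avoids any appeal to the general theory. Your closing remark is also apt: the explicit backward-growth estimate rules out \emph{any} accumulation at infinity with $\gamma\neq 0$, independently of the $K^+$ hypothesis, which is only needed (via Theorem~\ref{K+}) to confine cluster points to $I^+_\infty$ in the first place.
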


\begin{proof} Since $K^+$ is invariant under $H$, Theorem \ref{K+}  implies that an unbounded orbit can only cluster on $I^+_{\infty}$. Let $w\in K^+$ and $w_{n_i}=H^{n_i}(w)\rightarrow w_0.$ If $w_0\notin I^- $, then $w_{n_i-1}= H^{n_i-1}(w)=H^{-1}(w_{n_{i}})\rightarrow H^{-1}(w_0)= X^-.$ Since $H^{-1}$ is weakly regular $H^{n_i-1}(w)$ avoids a neighborhood of $I^{-}=I^-_{\infty}.$ Thus $H^{-(n_i-1)}$ is well-defined. Since  $H^{-1}$ is weakly regular $X^{-}$ is $H^{-1}$ attracting. So $ H^{-(n_i-1)}H^{n_i-1}(w)=w\rightarrow X^-$. This contradicts the fact that $w $ is a fixed point in $\mathbb C^k.$ Thus $w_0\in I^+_{\infty}\cap I^-=\{[1:0:0:0],[0:1:0:0]\}.$
\end{proof}

\section{Invariant Measures}\label{invm}
For regular automorphisms of $\mathbb C^n$, Sibony (\cite[Theorem 2.5.2]{S}) constructed  an invariant probability measure $\mu={T_+}^l\wedge {T_-}^{n-l} $    where $\dim\; I^-=l-1$. It is impossible to construct an invariant measure for $H$ and $H^{-1}$  by using the powers of the Green's currents since $\mu^+\wedge \mu^+=\mu^-\wedge \mu^-=0$ in $\mathbb C^3$.  Indeed, $$\mu_{\epsilon}=dd^c\max\{G^+, \epsilon\}\wedge \mu^+\to \mu^+\wedge\mu^+$$ as $\epsilon \to 0$.  We note that $K^+=\{G^+=0\}$ and supp $\mu_{\epsilon}\subset \partial K^+.$ Let $w\in \partial K^+$ and $B$ be a neighborhood of $w$ in $\mathbb C^3$ such that $G^+<\epsilon$ in $B$. Hence $\max\{G^+,\epsilon\}=\epsilon$ and $\mu_{\epsilon}=0$ on $B$. Thus $\mu_{\epsilon}=0=\mu^+\wedge\mu^+$ in $\mathbb C^3.$ 

When $f^{-1}$ is weakly regular and $I^-$ is $f$-attracting, by \cite[Theorem 3.1]{GS}, there is an invariant current $\sigma_{s}$ of bidimension $(s,s)$ where the $\dim\; X^-=s-1$. Then the wedge product $\mu:=\sigma_s\wedge T_-^s$ is  an $f$-invariant measure. It is well-defined  since $G^-$ is locally bounded  near $\overline {K^+}.$ 

In our case, $H^{-1}$ is weakly regular. If $|b|>1$, then $I^{-}$ is $H-$attracting and the construction of invariant measure as in \cite{GS} works for $H$. However, when $|b|\leq1$, $I^-$ is not $H-$attracting and  their construction does not work. 

Since $H^{-1}$ is weakly regular, $X^{-}=[0:0:1:0]$ is an attracting point for $H^{-1}$. Using this fact with the estimates on $K^{-}$ and some ideas from \cite{GS}, we construct an $H^{-1}-$ invariant current of bidimension $(1,1)$ which is supported on $\partial\overline {K^-}.$ 

\begin{theorem}\label{inv}There is a  closed positive current $\sigma$ of bidimension $(1,1)$ on $\mathbb P^3$ such that $(H^{-1})^*\sigma=2\sigma$ and supp $\sigma \subset \partial \overline {K^{-}} $.
\end{theorem}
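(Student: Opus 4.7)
The plan is to adapt the Guedj--Sibony construction of invariant currents for weakly regular maps (\cite[Theorem 3.1]{GS}) to the map $H^{-1}$. Since $H^{-1}$ is weakly regular with zero-dimensional attracting set $X^-=[0:0:1:0]$, their recipe naturally targets a bidimension-$1$ current on $\mathbb P^3$. The eigenvalue $2$ in the desired relation $(H^{-1})^*\sigma=2\sigma$ is the second dynamical degree of $H^{-1}$, which one can compute cohomologically: using birational change of variables together with $[H^*\omega]=2[\omega]$ (from Sibony's Green current construction for $H$, recalled in Section~\ref{dynamicsofH}),
$$\int_{\mathbb P^3}(H^{-1})^*\omega^2\wedge\omega=\int_{\mathbb P^3}\omega^2\wedge H^*\omega=2,$$
so $[(H^{-1})^*\omega^2]=2[\omega^2]$ in $H^{2,2}(\mathbb P^3)$.

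With this in hand, first I would set
$$\sigma_n:=\frac{1}{2^n}(H^{-n})^*\omega^2,$$
a closed positive bidimension-$(1,1)$ current on $\mathbb P^3$. Iterating the cohomological identity above yields the uniform mass bound $\|\sigma_n\|=\int\sigma_n\wedge\omega=1$ for every $n$. By weak compactness of positive currents of bounded mass, I would pass to the Ces\`aro averages $\bar\sigma_n:=\tfrac{1}{n}\sum_{k=0}^{n-1}\sigma_k$ and extract a weakly convergent subsequence $\bar\sigma_{n_j}\to\sigma$. The composition identity $(H^{-1})^*\circ(H^{-n})^*=(H^{-(n+1)})^*$ is valid as positive closed currents on $\mathbb P^3$ because the defect is supported on the pluripolar indeterminacy sets, which positive closed currents ignore; this gives $(H^{-1})^*\sigma_n=2\sigma_{n+1}$, so Ces\`aro averaging produces $(H^{-1})^*\bar\sigma_n=2\bar\sigma_n+\tfrac{2}{n}(\sigma_n-\sigma_0)$, and passing to the limit yields the desired invariance $(H^{-1})^*\sigma=2\sigma$.

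It then remains to establish $\mathrm{supp}\,\sigma\subset\partial\overline{K^-}$ by ruling out mass on each open piece of the complement. On $U^-$, the super-exponential convergence $H^{-n}(w)\to X^-$ from \cite[Lemma 6.1]{CF} should force the limiting current to carry no mass there; on the interior of $\overline{K^-}$ (non-empty, e.g., when $|b|>1$ by Proposition~\ref{jd}), the orbits remain in a fixed compact set, and the invariance $(H^{-1})^*\sigma=2\sigma$ together with the uniform mass bound precludes any interior mass; the remaining part of $\mathbb P^3\setminus(\overline{K^-}\cup U^-)$, lying in the hyperplane at infinity, is handled analogously via attraction to $X^-$.

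The hard part is the support analysis on $U^-$: although $H^{-n}$ contracts orbits super-exponentially onto $X^-$, the pull-backs $(H^{-n})^*\omega^2$ can have large pointwise norms on $U^-$ (the Jacobian determinant of $H^{-n}$ is constant, but singular values spread out), so one must carefully balance this pointwise growth against the $2^n$-growth of the total cohomological mass and show that all of the mass asymptotically flows onto $\partial\overline{K^-}$. This is the step where the weak regularity of $H^{-1}$ and the sharp dynamical estimates of Section~\ref{dynamicsofH-1} must be combined in the style of \cite[Theorem 3.1]{GS}, and it is the main technical obstacle in the proof.
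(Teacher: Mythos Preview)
Your architecture (Ces\`aro averages of $2^{-n}(H^{-n})^*\omega^2$, mass normalization, telescoping for invariance) is exactly the paper's, but the two support steps are genuinely incomplete, and the invariance step has a subtle error.

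\textbf{Invariance.} Your justification of $(H^{-1})^*\circ(H^{-n})^*=(H^{-(n+1)})^*$ on $\mathbb P^3$ via ``positive closed currents ignore pluripolar sets'' is false: integration currents on hypersurfaces are positive, closed, and supported on pluripolar sets. The paper avoids this by doing the telescoping entirely in $\mathbb C^3$, where $H^{-1}$ is a biholomorphism and $(H^{-1})^*$ is continuous on currents; the trivial extension to $\mathbb P^3$ is taken only afterwards.

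\textbf{No mass on $U^-$.} You correctly flag this as the hard step but do not supply the idea. The paper's key move is a \emph{dual} change of variables: for a ball $B\subset U^-$,
\[
\int_B\frac{(H^{-n})^*\omega^2}{2^n}\wedge\omega
=\int_{H^{-n}(B)}\omega^2\wedge\frac{(H^n)^*\omega}{2^n}.
\]
Since $H^{-n}(B)$ is eventually trapped in an arbitrarily small neighborhood $U_J$ of the point $X^-$, and since $2^{-n}(H^n)^*\omega\to T_+$ with $T_+\wedge\omega^2(\{X^-\})=0$ for dimension reasons, the right side is $\leq\varepsilon$ for large $n$. This converts the pull-back estimate you were worried about into a question about the \emph{forward} Green current near a point, which is easy.

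\textbf{No mass on $\mathrm{int}\,K^-$.} Your argument ``invariance $+$ bounded mass precludes interior mass'' does not work as stated: invariance of a limit current gives no immediate contradiction from bounded orbits. The paper instead bounds the approximants directly. Using Cauchy--Schwarz for positive forms,
\[
R_{N}=\frac{1}{N}\sum_{n=1}^N\Bigl(\frac{(H^{-n})^*\omega}{(\sqrt2)^n}\Bigr)^2
\le\Bigl(dd^cG_N\Bigr)^2,\qquad
G_N(w)=\frac{1}{\sqrt N}\sum_{n=1}^N\frac{\log(1+\|H^{-n}(w)\|^2)^{1/2}}{(\sqrt2)^n}.
\]
On compacta in $\mathrm{int}\,K^-$ one has $\|H^{-n}(w)\|\le C^n$ by \cite[Lemma 6.2]{CF}, so $G_N\to0$ uniformly and hence $(dd^cG_N)^2\to0$. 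This is where the at-most-exponential growth on $K^-$ is actually used; boundedness (Proposition~\ref{jd}) is not needed, and the argument works for all $b$.
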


\begin{proof}Let $\omega$ be the standard  K\"{a}hler  form in $\mathbb P^3$ and $\omega|_{\mathbb C^3} $ be the restriction of $\omega$  to $\mathbb C^3.$    We define $$R_N=\frac {1}{N}\sum_{n=1}^{N}\frac{(H^{-n})^*\omega^2|_{\mathbb C^3}}{2^n}.$$ We still denote by $R_N$ the trivial extension to $\mathbb P^3.$ Then 
\begin{eqnarray*}||R_{N}||&=&\int_{\mathbb P^3} \frac{1}{N}\sum_{n=1}^{N}\frac{(H^{-n})^*\omega^2}{2^n}\wedge \omega=\int_{\mathbb C^3} \frac{1}{N}\sum_{n=1}^{N}\frac{(H^{-n})^*\omega^2}{2^n}\wedge \omega \\ &=&\int_{\mathbb C^3} \frac{1}{N}\sum_{n=1}^{N}\frac{(H^{n})^*\omega}{2^n}\wedge \omega^2 =\int_{\mathbb P^3} \frac{1}{N}\sum_{n=1}^{N}\frac{(H^{n})^*\omega}{2^n}\wedge \omega^2=1.\end{eqnarray*}
Therefore there is a subsequence $R_{N_{j}}$ which converges to  a current $\sigma$ in the sense of currents.  Since $||R_{N_{j}}||=1$, $\sigma$ has mass $1$ in $\mathbb P^{3}$ and it is invariant under the pullback by $H|_{\mathbb C^3}^{-1}$. Indeed,
\begin{eqnarray*}
(H^{-1})^*R_{N_j}&=&\frac{2}{N_{j}}\sum_{n=1}^{N_{j}} \frac{(H^{-(n+1)})^*\omega^2}{2^{n+1}}\\&=&\frac{2}{N_j}\left(N_jR_{N_j}-\frac{(H^{-1})^*\omega^2}{2}+\frac{(H^{-(N_j+1)})^*\omega^2}{2^{N_j+1}}\right) \rightarrow 2\sigma,
\end{eqnarray*}
as $ ||\frac{(H^{-(N_j+1)})^*\omega^2}{2^{N_j+1}}||=1.$
On the other hand $(H^{-1})^*$ is continuous on currents in $\mathbb C^3$. Thus $(H^{-1})^*\sigma=2\sigma$ on $\mathbb C^3.$ 

We first prove that $\text{supp}\; \sigma\subset \overline{K^{-}}$. By \cite[Theorem 2.2]{GS}, $$\overline{K^{-}}=K^{-} \cup I^{-}.$$  

Let $X^{-}=\cap_{j=1}^{\infty}U_{j}$ where $U_j$'s are decreasing  open sets in $\mathbb P^3$ and $\epsilon>0.$ Since dim$(X^-)=0$ and $T_+$ is a current of bidimension $(2,2)$, $T_+\wedge \omega^2(X^-)=0$.   Hence there is a $U_{J}$ such that $T_+\wedge \omega^2(\overline{U_J})<\epsilon.$ Let $B\subset \mathbb P^3\setminus \overline{K^{-}}$ be a ball. Since $H^{-1}$ is weakly regular, $X^-$ is attracting, with basin $U^-$ in $\mathbb C^3$.  Thus there exists $M>0$ such that $H^{-n}(B)\subset U_J$ for all $n\geq M.$ By \cite[Theorem 1.6.1]{S}, $$\frac{(H^n)^*(\omega)}{2^n}\rightarrow T_+\;\text{and}\;   \frac{1}{N}\sum_{n=1}^{N}\frac{(H^n)^*(\omega)}{2^n} \rightarrow T_+.$$ Also the subsequence 
\begin{eqnarray*} T_{N_{j}}:&=&\frac{1}{N_j}\sum_{n=M}^{N_j} \frac{(H^n)^*(\omega)}{2^n}\\ &=& \frac{1}{N_j}\left(\sum_{n=1}^{N_j}\frac{(H^n)^*(\omega)}{2^n}- \sum_{n=1}^{M-1}\frac{(H^n)^*(\omega)}{2^n}\right)\rightarrow T_+ 
\end{eqnarray*}

as $N_j\rightarrow \infty.$ Hence  $T_{N_j}\wedge \omega^2\rightarrow T_+\wedge \omega^2$  as measures and $$\limsup_{N_j\to\infty} T_{N_j}\wedge \omega ^2(\overline{U_J})\leq T_+\wedge \omega^2(\overline{U_J})<\epsilon. $$ This implies that $  T_{N_j}\wedge \omega ^2(\overline{U_J})\leq \epsilon$ for all $N_j>N$ for some $N>0.$ Since $R_{N_j}$ does not charge the hyperplane at infinity, we have that
\begin{eqnarray*}
\int_BR_{N_j}\wedge \omega&=&\int_{B\cap \mathbb C^3}\frac{1}{N_j}\sum_{n=1}^{N_j}\frac{(H^{-n})^*\omega^2}{2^n}\wedge \omega \\ &=&  \frac{1}{N_j}\sum_{n=1}^{N_j}\int_{H^{-n}(B\cap \mathbb C^3)}\frac{(H^n)^*(\omega)}{2^n}\wedge \omega^2 \\ &\leq&  \frac{1}{N_j}\left( \sum_{n=1}^{M-1}\int_{H^{-n}(B\cap\mathbb C^3)}\frac{(H^n)^*(\omega)}{2^n}\wedge \omega^2  + \sum_{n=M}^{N_j}\int_{U_J}\frac{(H^n)^*(\omega)}{2^n}\wedge \omega^2    \right)\\  &\leq& \frac{M-1}{N_j}+\int_{\overline{U_J}}T_{N_j}\wedge\omega^2\leq 2\epsilon,
\end{eqnarray*}
if $N_j$ is big enough.
This shows that $\sigma\wedge\omega(B)=0.$ 

Now we will show that $\sigma$ has no mass in the interior of $\overline {K^{-}}$. Let $U\subset\subset \text{int}\;\overline{K^{-}}$. Since $\overline{K^{-}}=K^-\cup I^-_{\infty}$, $U$ is actually contained in $K^-.$ By \cite[Lemma 6.3]{CF}, there is $C>1$ such that $||H^{-n}(z)||\leq C^n $ for all $z\in U$ and $n>0$.  In $\mathbb C^3$,
\begin{eqnarray*} R_{N_j}=\frac{1}{N_j}\sum_{n=1}^{N_j}\left(\frac{(H^{-n})^*\omega}{(\sqrt{2})^n}\right)^2&\leq&\left(\frac{1}{N_j^{\frac{1}{2}}} \sum_{n=1}^{N_j}\frac{(H^{-n})^*\omega}{(\sqrt{2})^n}\right)^2 \\ &=& (dd^cG_{N_j})^2 
\end{eqnarray*}   
where $G_{N_j}(z):=\frac{1}{N_j^{\frac{1}{2}}}\sum_{n=1}^{N_j}\frac{\log(1+||H^{-n}(z)||^2)^{\frac{1}{2}}}{(\sqrt 2)^{n}}$. \\ On $U$, $$0\leq G_{N_j}\leq \frac{1}{N_j^{1/2}}\sum_{n=1}^{N_j}\frac{n\log C}{(\sqrt 2)^n}. $$ Thus $G_{N_j}$ converges to $0$ locally uniformly on  int $K^-$ and  hence $$R_{N_j}\leq (dd^cG_{N_j})^2\rightarrow 0,$$ which implies that $\sigma$ has no mass on int $K^-$. Thus $\text{supp}\;\sigma\subset  \partial \overline {K^{-}}. $

\end{proof}

\section{A Two Dimensional Model}\label{2dimmodel}

We will consider the case $a=0$. Then  $H$ becomes a map of $\mathbb C^2$, 
$$H(x,y)=(xy, x^2+by).$$
We note that $H$ is not an automorphism anymore. It determines a map $H:\mathbb P^2\to\mathbb P^2$ by $H([x:y:t])=[xy:x^2+byt:t^2]$. The indeterminacy point is $I=[0:1:0]$.  The sets $K^+$ and $U^+$ are defined as in (\ref{K+defn}) without the $z$ coordinate.  Then all the estimates for the map in $\mathbb C^3$ in Sections \ref{dynamicsofH}  hold for the reduced map in $\mathbb C^2$.

In the following examples, when $b^4=1$, we have some lines which are contained in $K^+$ and invariant under the second iterate $H^2$ of $H$. If $w$ lies on these lines then $H^n(w)\approx C(w)^{(\sqrt 3)^n}$, so the maximal growth on $K^+$ given by the estimates from Section \ref{dynamicsofH} does occur.

\begin{example} The second iterate of $H$ is $$H^2(x,y)=H(xy,x^2+by)=(xy(x^2+by), x^2(y^2+b)+b^2y),$$ so $x_2=xy(x^2+by)$ and $y_2=x^2(y^2+b)+b^2y.$ 

If $b^4=1$ and  $y^2=-b$ then $y_2^2=b^4y^2=-b^5=-b$. We have four choices for $b$.\\
Case 1. If $b=1$ then $H^2(x,y)=(x^3y+xy^2,x^2(y^2+1)+y)$. Hence  $H^2(x,i)=(ix^3-x,i)$ and $H^2(x,-i)=(-ix^3-x,-i).$ So the lines $\{y=i\}$ and $\{y=-i\}$ are invariant under $H^2$.\\
Case 2. If $b=-1$ then $H^2(x,y)=(x^3y-xy^2,x^2(y^2-1)+y)$. Hence $H^2(x,1)=(x^3-x,1)$ and $H^2(x,-1)=(-x^3-x,-1).$ So the lines $\{y=1\}$ and $\{y=-1\}$ are invariant under $H^2$.\\
Case 3. If $b=i$ then $H^2(x,y)=(x^3y+ixy^2,x^2(y^2+i)-y)$. Hence $H^2(x,e^{\frac{3\pi i}{4}})=(e^{\frac{3\pi i}{4}}x^3+x,-e^{\frac{3\pi i}{4}})$  and $H^2(x,-e^{\frac{3\pi i}{4}})=(-e^{\frac{3\pi i}{4}}x^3+x,e^{\frac{3\pi i}{4}}).$\\
Case 4. If $b=-i$ then  $H^2(x,y)=(x^3y-ixy^2,x^2(y^2-i)-y)$. Hence $H^2(x,e^{\frac{\pi i}{4}})=(e^{\frac{\pi i}{4}}x^3+x,-e^{\frac{\pi i}{4}})$  and $H^2(x,-e^{\frac{\pi i}{4}})=(-e^{\frac{\pi i}{4}}x^3+x,e^{\frac{\pi i}{4}}).$

In all of the cases above, since $b^4=1$ and $y^2=-b$, we have $$H^2(x,\sqrt{-b})=(x^3\sqrt{-b}-b^2x,b^2\sqrt{-b}).$$ Thus $|y_{2n}|=1$, $|x_{2n}|\approx |x|^{3^n}$, $|x_{2n+1}|=|x_{2n}y_{2n}|\approx |x|^{3^n}$ and $|y_{2n+1}|=|x_{2n}^2+by_{2n}|\approx |x|^{2\cdot 3^n}$ for all $n\geq 0$. Hence $H^n(w)\approx C(w)^{(\sqrt 3)^n}$ if $w$ is contained in these lines. 
\end{example}

\addcontentsline{toc}{chapter}{Bibliography}

 \end{document}